\newtheorem{theorem}{Theorem}
\newtheorem{lemma}{Lemma}
\newtheorem{claim}{Claim}
\newtheorem{crl}{Corollary}
\newtheorem{rem}{Remark}
\DeclareMathOperator{\R}{\mathbb{R}}
\DeclareMathOperator{\Z}{\mathbb{Z}}
\DeclareMathOperator{\N}{\mathbb{N}}
\DeclareMathOperator {\me}{\mathbf{e}}
\DeclareMathOperator{\len}{len}
\newcommand{\0}{\mathbb{0}}
\newcommand{\st}{\ensuremath{\mbox{s.t.}}}
\renewcommand{\H}{\ensuremath{\mathcal{H}}}
\newcommand{\lpg}{(LP$_g$)}
\newcommand{\opt}{\ensuremath{\mbox{opt}}}
\newcommand{\gk}{\ensuremath{\mbox{GK}}}
\title{Fast Approximation Algorithms for
the Generalized Survivable Network Design Problem}
\begin{document}

\author[1]{Andreas Emil Feldmann}
\author[2]{Jochen K\"onemann}
\author[2]{Kanstantsin Pashkovich}
\author[2]{Laura Sanit\`a}

\affil[1]{SZTAKI, Hungarian Academy of Sciences, Hungary \& KAM, Charles 
University in Prague, Czechia. \texttt{feldmann.a.e@gmail.com}}
\affil[2]{Department of Combinatorics and Optimization, University of Waterloo, 
Canada. \texttt{\{jochen, kpashkovich, laura.sanita\}@uwaterloo.ca}}


\maketitle

\begin{abstract}
  In a standard {\em $f$-connectivity} network design problem, we are
  given an undirected graph $G=(V,E)$, a {\em cut-requirement
    function} $f:2^V \rightarrow \N$, and non-negative costs $c(e)$ for
  all $e \in E$. We are then asked to find a
  minimum-cost vector $x \in \N^E$ such that
  $x(\delta(S)) \geq f(S)$ for all~$S \subseteq V$. We
  focus on the class of such problems where $f$ is a {\em proper}
  function. This encodes many well-studied NP-hard problems such as
  the {\em generalized survivable network design} problem.

  In this paper we present the first {\em strongly polynomial time}
  FPTAS for solving the LP relaxation of the standard IP formulation
  of the $f$-connectivity problem with general proper functions~$f$. 
Implementing Jain's algorithm, this yields a strongly
  polynomial time $(2+\epsilon)$-approx\-imation for the generalized
  survivable network design problem (where we consider {\em rounding
    up} of rationals an arithmetic operation).
\end{abstract}

\section{Introduction}

The input in a typical {\em network design} problem consists of a
directed or undirected graph $G=(V,E)$, non-negative unit-capacity
installation costs $c(e)$ for all $e \in E$, and a collection of {\em
  connectivity requirements} among the vertices in $V$. The goal is
then to find a minimum-cost capacity installation in $G$ that
satisfies the connectivity requirements. The above abstract problem
class captures many practically relevant optimization problems, many
of which are NP-hard. Therefore, maybe not surprisingly, there has
been a tremendous amount of research in the area of approximation
algorithms for network design problems throughout the last four decades
(e.g., see~\cite{Gu11,Ho96}). 

\begin{wrapfigure}[6]{R}{0.45\textwidth}
\vspace{-9mm}
\begin{align}
  \min ~ & \sum_{e \in E} c(e)x(e) \tag{IP} \label{eq:nsdp_ip} \\
  \st ~ & x(\delta(S)) \geq f(S) ~~ \forall S \subset V
          \notag \\
  & x \geq \0, x \mbox{ integer.} \notag
\end{align}
\end{wrapfigure}

Connectivity requirements can
be modelled in many ways, but we will adopt the {\em $f$-connectivity}
viewpoint in this paper. Here,  one is given a {\em cut-requirement function}
$f : 2^V \rightarrow \N$, 
and one wants to find a minimum-cost non-negative integer vector $x$
such that for each $S \subseteq V$, the sum of variables $x(e)$ for edges $e$ 
crossing the cut $S$ is at least $f(S)$. In other words, we are
interested in problems that can be encoded by integer 
program \eqref{eq:nsdp_ip}; here $\delta(S)$ denotes the set of edges incident 
to a vertex in $S$ and a vertex outside $S$, and 
$x(\delta(S)):=\sum_{e\in\delta(S)} x(e)$.

Restricting even further, we will henceforth only be concerned with
instances of \eqref{eq:nsdp_ip} where $f$ is {\em proper}: 
$f$ is proper if it satisfies the three properties of {\em maximality} (i.e., 
$f(A \cup B) \leq \max \{f(A),f(B)\}$
for all disjoint sets $A,B \subseteq V$), {\em symmetry} 
(i.e., $f(S)=f(V\setminus S)$ for all $S \subseteq V$), and $f(V)=0$.
Program \eqref{eq:nsdp_ip} 
with proper cut-requirement functions $f$ captures
(among others) the special case 
where the goal is to find a minimum-cost
network that has $r(u,v)$ edge-disjoint paths connecting any pair
$u,v$ of vertices (for given non-negative integer parameters $r$). 
The implicit cut-requirement function in this case is then given by
$f(S):=\max_{u \in S, v \in V\setminus S} r(u,v)$ for all $S \subseteq V$.

Based on the {\em primal-dual} method, Goemans and
Williamson~\cite{GW95} first gave a $2\H(f_{\max})$-approxima\-tion
algorithm for \eqref{eq:nsdp_ip} with proper cut-requirement functions
where one is allowed to pick edges multiple times in the solution.
Goemans et al.~\cite{GSGT1994} later obtained the same performance ratio for the
setting where multiple copies of edges are not allowed. More recently,
in a breakthrough result, Jain~\cite{Jain} obtained a
$2$-approximation for the more general class of {\em
  skew-supermodular} cut-requirement functions based on {\em iterative 
rounding}.
\begin{wrapfigure}[7]{R}{0.45\textwidth}
\vspace{-5mm}
\begin{align}
  \min ~ & \sum_{e \in E} c(e) x(e)
           \tag{$\mbox{LP}_1$} \label{eq:jain}\\
  \st ~& x(\delta(S)) \geq f(S) - z(\delta(S)) ~~ \forall S \subseteq V \notag
  \\
  & x(e) = 0 ~~ \forall e \in I\notag \\
  & x \geq \0 \notag
\end{align}
\end{wrapfigure}
Jain's algorithm iteratively fixes the value of a subset of variables in
\eqref{eq:nsdp_ip}. To aid this, he first defines a slightly more
general version of the IP, where the value of certain variables is
fixed. Specifically, for a set $I \subseteq E$ of edges, assume that
the value of variable $x(e)$ is fixed to $z(e) \in \N$. The LP
relaxation of the IP for the corresponding residual problem is given
in~\eqref{eq:jain}. Jain's key observation was that the extreme points
of the feasible region of \eqref{eq:jain} are {\em sparse}, and have
at least one variable with value at least $1/2$.

Capitalizing on this insight, his algorithm then iteratively solves $O(|V|)$ 
instantiations of \eqref{eq:jain} while intermittently
rounding up the values of large variables in the computed solutions.
In order to solve \eqref{eq:jain} one needs to employ the Ellipsoid
method~\cite{GLS88} together with a polynomial-time seperation oracle
for the LP's constraints (see~\cite{Gabow}).

Our work is motivated by {\em Open Problem 4} in Williamson and
Shmoys' recent book~\cite{Williamson} where the authors point out that
solving \eqref{eq:jain} for general (proper) functions $f$ may be
computationally quite demanding despite the fact that it can be done
efficiently in a theoretical sense. The authors leave as an open
problem the design of a {\em primal-dual} $2$-approximation for the
survivable network design problem.  Our main result is a replacement
of the Ellipsoid-based exact LP-solver calls in Jain's algorithm by
approximate ones that are based on the (in a sense) primal-dual 
 {\em multiplicative-weights} method of~\cite{Garg}. 
We realize that the likely intended meaning of {\em primal-dual} in Williamson 
and
Shmoys' open problem statement is the {\em primal-dual method
for approximation algorithms} (as in~\cite{GW95}). However
we believe that the contribution made in this paper is in line with
the motivation given for Open Problem 4 in~\cite{Williamson}:
we substantially speed up LP computations in Jain's algorithm at the
expense of an inconsequential loss in performance guarantee of the 
algorithm. 

\begin{theorem}\label{thm:main}
  For any $\epsilon>0$, there 
  is a $(1+\epsilon)$-approximation algorithm 
  for \eqref{eq:jain} that runs in {\em strongly polynomial
  time}\footnote{An algorithm is \emph{strongly polynomial} if its
  number of arithmetic operations, i.e. the number of additions,
  subtractions, multiplications, divisions and comparisons, is bounded
  by a polynomial in the dimension of the problem (i.e., the number of
  data items in the input), and the length of the numbers occurring
  during the algorithm is bounded by a polynomial in the length of the
  input. } 
independently of the values of $c$ and $f$.
\end{theorem}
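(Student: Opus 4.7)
The plan is to mimic the multiplicative-weights method of Garg and K\"onemann~\cite{Garg} applied to the dual of~\eqref{eq:jain}, which is the packing LP with variables $y_S$ for each cut $S \subseteq V$, constraints $\sum_{S:\, e \in \delta(S)} y_S \leq c(e)$ for every $e \in E \setminus I$, and objective $\max \sum_S y_S d(S)$, where $d(S) := f(S) - z(\delta(S))$. Garg--K\"onemann applies naturally to this packing form, since one has polynomially many constraints and exponentially many variables, provided one can implement a \emph{min-ratio cut} oracle that, given edge lengths $\ell$, returns a cut $S^*$ minimizing $\ell(\delta(S))/d(S)$ over the family $\{S : d(S) > 0\}$.

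The outer loop would initialize $\ell(e) := \delta/c(e)$ for a carefully chosen $\delta$, and, in each iteration, call the oracle, multiplicatively increase $\ell$ on the edges of $\delta(S^*)$, and accumulate $S^*$ in a primal counter. Upon termination (when $\min_S \ell(\delta(S))/d(S) \geq 1$), the counters, appropriately scaled, give a fractional solution $x$ to~\eqref{eq:jain}; the standard Garg--K\"onemann potential argument then yields $c^T x \leq (1+O(\epsilon))\cdot\mathrm{OPT}$ after $O(|E|\log|E|/\epsilon^2)$ iterations. To implement the oracle I would proceed parametrically: for each candidate value $k$ of $d(S)$, compute $\mu_k := \min\{\ell(\delta(S)) : f(S) - z(\delta(S)) \geq k\}$ using the separation machinery of Gabow~\cite{Gabow} for proper functions (which reduces the constrained-cut problem to a polynomial number of max-flow computations), and return the cut attaining $\min_k \mu_k/k$.

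The hard part will be achieving \emph{strong polynomiality} independent of the magnitudes of $c$ and $f$. Two data-dependent terms appear naively: a $\log(c_{\max}/c_{\min})$ factor from the Garg--K\"onemann initialization $\delta$, and an $f_{\max}$ factor from the oracle's threshold scan. To handle the first, I would precompute a strongly polynomial constant-factor upper bound on the LP optimum (for instance by running the primal-dual $2\H(f_{\max})$-approximation of~\cite{GW95} on the instance, noting that its combinatorial guarantee does not itself depend on numerical magnitudes) and rescale costs so that the residual logarithmic factor is $O(\log|E|)$. To handle the second, I would exploit that a proper $f$ on $|V|$ vertices induces only polynomially many distinct essential level sets $\{S : f(S) \geq k\}$: all but $O(|V|)$ values of the threshold $k$ yield the same family of inclusion-minimal cuts, so the parametric scan collapses to $O(|V|)$ calls of a single min-cut subroutine. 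Piecing these ingredients together, the overall algorithm performs $\mathrm{poly}(|V|, |E|, 1/\epsilon)$ arithmetic operations and produces a $(1+\epsilon)$-approximate solution to~\eqref{eq:jain}, as claimed.
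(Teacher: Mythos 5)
Your high-level plan — run Garg--K\"onemann on the dual of \eqref{eq:jain} with an approximate min-ratio cut oracle — matches the paper, but the oracle you sketch has a genuine gap that is precisely where the paper's main technical work lies, and your fix for strong polynomiality attacks the wrong parameter.

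You propose to implement the oracle by a parametric scan over the requirement level $k$, computing $\mu_k := \min\{\ell(\delta(S)) : f(S) - z(\delta(S)) \geq k\}$ ``using the separation machinery of Gabow''. This does not go through once $z \neq \0$: the feasibility constraint $f(S) - z(\delta(S)) \geq k$ depends on the cut $S$ through the term $z(\delta(S))$, so the family of eligible cuts is no longer a level set of a proper function and Gomory--Hu / max-flow machinery does not directly apply (the paper states this explicitly after \autoref{lem:gabow}). Relatedly, your claim that there are only $O(|V|)$ essential thresholds $k$ because proper $f$ has few level sets does not help here either, because $k$ ranges over values of $f(S) - z(\delta(S))$, which varies with the cut, not merely with $f$. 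The paper's actual reduction parametrizes over the \emph{ratio threshold} $\gamma$ rather than over $k$: checking $\min_S x(\delta(S))/(f(S)-z(\delta(S))) < \gamma$ is equivalent to asking whether some $S$ satisfies $x(\delta(S))/\gamma + z(\delta(S)) < f(S)$, which \emph{is} a separation problem for the proper function $f$ with the modified edge weights $x/\gamma + z$, solvable via a Gomory--Hu tree (\autoref{lem:gabow}). That algebraic observation is the crux you are missing.

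Even granting a correct per-$\gamma$ decision procedure, one then needs the binary-search window $[\gamma_{\min},\gamma_{\max}]$ to have polynomially bounded ratio to keep the whole thing strongly polynomial. The paper's \autoref{alg:upper_lower_bounds} achieves $\gamma_{\max}/\gamma_{\min}\le |E|^2/2$ by repeatedly contracting a heaviest edge of some violated cut, and it crucially uses \autoref{rem:function_values} ($f(S)-z_k(\delta(S))\le |E|/2$ in Jain's iterations) to bound $\gamma_{\min}$. Your proposal offers nothing comparable. Finally, your worry about a $\log(c_{\max}/c_{\min})$ factor from initialization is a red herring: as in \autoref{alg:mult_weight} and \autoref{cl:mult_weight}, the iteration count of GK applied to this covering LP is $\frac{1}{\zeta}\log_{1+\zeta}((1+\zeta)n)$, independent of $c$, so no cost rescaling (and hence no preliminary Goemans--Williamson pass) is needed.
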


In contrast to our result, Jain~\cite{Jain} observes that the relaxations of 
\eqref{eq:nsdp_ip} and \eqref{eq:jain} are of
a {\em combinatorial} nature, and hence can be solved in strongly
polynomial-time via Tardos' algorithm~\cite{Ta86} {\em whenever} their
number of variables and constraints are polynomially bounded (in the
problem dimension). For example, \eqref{eq:nsdp_ip} and \eqref{eq:jain} have an
equivalent compact representation when
$f(S)=\max_{u \in S, v \not\in S} r(u,v)$ for all $S \subseteq V$. We also
note that one can argue that the Ellipsoid method applied to 
 \eqref{eq:jain} and the linear relaxation of \eqref{eq:nsdp_ip} terminates in a strongly polynomial number of
steps {\em whenever} function $f(S)$ is polynomially bounded (in the problem
dimension), for all $S \subseteq V$ as this implies small encoding-length of 
vertices
of the feasible region of \eqref{eq:nsdp_ip} and \eqref{eq:jain}.

To achieve the result in \autoref{thm:main}, we rely on the
\emph{multiplicative weights method} of Garg and
K\"onemann~\cite{Garg} (henceforth referred to as \gk). This is a natural idea 
as \eqref{eq:jain} belongs to the class of {\em
  positive covering LPs}. As such, \cite{Garg} applies to the LP dual
of \eqref{eq:jain}. The algorithm can therefore be used to compute an
approximate pair of primal and dual solutions in strongly-polynomial
time as long as we are able to provide it with a strongly-polynomial
time (approximation) algorithm for the so called \emph{shortest row
  problem}. For \eqref{eq:jain} this boils down to computing
\[
	\min_{\substack{f(S)-z(\delta(S))\ge 1\\S\subseteq V}} 
\frac{x(\delta(S))}{f(S)-z(\delta(S))},
\]
for given $x \in \R^E_+$ and $z\in\Z^E_+$, i.e., finding a corresponding set 
$S$. The above shortest row problem is solved
directly by Gabow et al.'s strongly-polynomial time separation oracle
for the constraints of \eqref{eq:jain} (see~\cite{Gabow}) in the case
where $I=\emptyset$, and hence $z=\0$.  Once $I \neq \emptyset$, Gabow
et al.'s algorithm can not be used directly to give a strongly
polynomial-time algorithm, and a more subtle approach is needed.  In
fact, in this case, we provide only a $(1+\zeta)$-approximate solution
to the shortest row problem (for appropriate $\zeta>0$). As is
well-known (e.g., see~\cite{Fleischer,Garg}), the exact shortest-row
subroutine used in \gk\ may be replaced by an $\alpha$-approximate
one, sacrificing a factor of $\alpha$ in the overall performance ratio
of the algorithm in~\cite{Garg}.  We obtain the following direct
corollary of \autoref{thm:main}.

\begin{crl}\label{crl:main}
  Combining \autoref{thm:main} with Jain's algorithm, we obtain a
  strongly polynomial-time\footnote{if rounding up numbers is
    considered an arithmetic operation.}
 $(2+\varepsilon)$-approximation
  algorithm for~\eqref{eq:nsdp_ip}, that does not use linear programming
  solvers.  
\end{crl}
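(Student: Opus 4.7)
The plan is to execute Jain's iterative-rounding template on~\eqref{eq:nsdp_ip}, but to replace each exact LP call by an invocation of the FPTAS from \autoref{thm:main} followed by a combinatorial purification that extracts an extreme point of no greater cost. Recall Jain's algorithm: maintain $I\subseteq E$ and $z\in\N^E$; in each of $T=O(|V|)$ iterations compute an extreme-point optimum $x^*_t$ of the current~\eqref{eq:jain}, apply the half-integrality theorem to pick $e_t\notin I$ with $x^*_t(e_t)\geq 1/2$, set $z(e_t)\gets z(e_t)+\lceil x^*_t(e_t)\rceil$, and add $e_t$ to $I$. Substituting the zero-on-$e_t$ restriction of $x^*_t$ into~\eqref{eq:jain} for iteration $t+1$ yields the telescoping inequality $\opt_{t+1}\leq c^\top x^*_t - c(e_t)x^*_t(e_t)$, which combined with $\lceil y\rceil\leq 2y$ for $y\geq 1/2$ gives a total cost bounded by $2\cdot\opt_{LP}$.

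To remove the LP solver, in each iteration I would first invoke \autoref{thm:main} with accuracy $\delta:=\varepsilon/(4|V|)$ to produce a feasible $\tilde{x}_t$ with $c^\top\tilde{x}_t\leq(1+\delta)\opt_t$, and then purify $\tilde{x}_t$ in strongly polynomial time into an extreme point $x^*_t$ of the same LP satisfying $c^\top x^*_t\leq c^\top\tilde{x}_t$. Purification maintains the set of linearly independent constraints tight at the current iterate, computes a nonzero direction $d$ in their null space by linear algebra, orients $d$ so that $c^\top d\leq 0$, and walks $\tilde{x}_t$ along $d$ until some new constraint becomes tight; after at most $|E|$ such moves the iterate is an extreme point. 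Jain's half-integrality theorem then supplies an edge $e_t$ with $x^*_t(e_t)\geq 1/2$, on which we round as in Jain's scheme (using that rounding up of rationals is permitted by the corollary's footnote).

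Repeating Jain's telescoping argument with $c^\top x^*_t\leq(1+\delta)\opt_t$ in place of $c^\top x^*_t=\opt_t$ gives $c(e_t)x^*_t(e_t)\leq(1+\delta)\opt_t-\opt_{t+1}$; summing over $t\leq T$ and using the monotonicity $\opt_t\leq\opt_{LP}$ yields $\sum_t c(e_t)x^*_t(e_t)\leq(1+\delta T)\opt_{LP}$, so that the final cost is at most $2(1+\delta T)\opt_{LP}\leq(2+\varepsilon)\opt_{LP}$ by the choice of $\delta$. Every building block---\autoref{thm:main}, the linear algebra, the minimum-cut subroutines employed inside purification, and rounding up of rationals---is strongly polynomial, so the overall algorithm is strongly polynomial and uses no linear-programming solver.

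The main obstacle is the purification step: because~\eqref{eq:jain} has exponentially many cut constraints, naively identifying the first constraint to become tight along a search direction $d$ is not obviously strongly polynomial. I would reduce this to a parametric minimum-cut question on the residual demand and solve it combinatorially, leveraging the strongly polynomial separation oracle of Gabow et al.~\cite{Gabow}. This combinatorial vertex-extraction subroutine is precisely what replaces the Ellipsoid-based step that would otherwise be needed, and it is the essential technical ingredient that makes the marriage of \autoref{thm:main} with Jain's template strongly polynomial.
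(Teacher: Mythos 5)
Your plan hinges on a purification subroutine that does not exist in the paper and that, as far as the authors know, cannot be made strongly polynomial. The ratio test in your purification step must, given a feasible $\tilde{x}$ and a direction $d$, compute $\max\{\lambda : \tilde{x}+\lambda d \text{ feasible}\}$, i.e.\ evaluate $\min_{S:\, d(\delta(S))<0}\bigl(x(\delta(S))-(f(S)-z(\delta(S)))\bigr)/\bigl(-d(\delta(S))\bigr)$ over exponentially many sets $S$, where the constraint right-hand side involves the \emph{residual} function $f-z$. This is precisely the type of problem the paper says it does not know how to solve exactly once $z\neq\0$: in \autoref{sec:shortest_row} the authors state that $g(S)=f(S)-z(\delta(S))$ is no longer proper and that ``Gabow et al.'s algorithm can not be used directly\ldots We do not know how to solve this problem \emph{exactly} in strongly-polynomial time.'' The separation oracle of Gabow et al.\ you invoke is established only for proper $f$, not for $f-z$, so ``leveraging the strongly polynomial separation oracle'' does not resolve the difficulty — it presupposes exactly the capability whose absence is the paper's central obstacle. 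Moreover $d$ can have negative entries, so the numerator in the ratio test is not even a capacity of a cut, making the ``parametric minimum-cut'' reduction you sketch far from obvious. Since the entire point of \autoref{thm:main} is to avoid Ellipsoid-like machinery, appealing to exact vertex computation reintroduces the very bottleneck the theorem circumvents.

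The paper's actual proof takes a genuinely different route that sidesteps extreme points altogether. It implements Jain's own alternative suggestion: instead of computing a basic optimal solution of~\eqref{eq:jain}, solve, for each edge $g$, the linear program \lpg\ obtained by adding the explicit constraint $x(g)\ge 1/2$, and keep the cheapest. Jain's structural lemma guarantees $\min_g \opt_g$ is at most the optimum of~\eqref{eq:jain}, and any feasible solution of \lpg\ automatically has a large coordinate, so no vertex extraction is needed. Each \lpg\ is itself a positive covering LP to which \autoref{thm:main} applies directly. \autoref{cl:main} then shows this yields a $2(1+\zeta)^{|E|}$-approximation with at most $|E|$ iterations and $O(|E|^2)$ calls to \autoref{alg:mult_weight}; choosing $\zeta=\ln(1+\varepsilon)/|E|$ gives the stated $(2+\varepsilon)$ bound. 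Your telescoping/error-accumulation analysis would be fine if purification worked, but the purification gap is fatal; the \lpg\ device is what makes the combination of \autoref{thm:main} with Jain's template go through.
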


We once again stress that the above results hold for {\em any not
  necessarily bounded} proper
cut-requirement function $f$. 

\paragraph*{Further related work}

The past 30 years have seen significant research on solving linear
programs efficiently; e.g., see the work by Shahrokhi \&
Matula~\cite{SM90}, Luby \& Nisan~\cite{LN93}, Grigoriadis \&
Khachian~\cite{GK94,GK96}, Young~\cite{Yo95,Yo01}, Garg \&
K\"onemann~\cite{Garg}, Fleischer~\cite{Fleischer}, and Iyengar \&
Bienstock~\cite{BI04}. We refer the reader to two recent surveys 
by Bienstock~\cite{Bi06} and Arora, Hazan \& Kale~\cite{AHK12}. 

Particularly relevant to this paper is the work by
Fleischer~\cite{Fl04} who previously proposed a Lagrangian-based
approximation scheme for positive covering LPs with added {\em
  variable upper bounds}. Their algorithm builds on~\cite{Garg} and
\cite{Fleischer}, and achieves a performance ratio of $(1+\epsilon)$
for any positive $\epsilon$ using $O(\epsilon^{-2}m\log (Cm))$ calls
to a separation oracle for the given covering problem; here $m$
denotes the number of variables, and $C$ is bounded by the maximum
objective function coefficient. Garg and Khandekar~\cite{GK04}
later addressed the same problem, and presented an improved algorithm
with $O(m\epsilon^{-2}\log m + \min\{n, \log\log C\})$ calls to an
oracle for the most violated constraint.

The algorithms in~\cite{Fl04,GK04} naturally apply to
solving LP relaxations of various network design IPs where the
multiplicity $x(e)$ of each edge $e$ is limited to some given upper
bound. As the approaches in~\cite{Fl04,GK04} need to approximate the
same type of the shortest row problem, as an immediate corollary of
our result, we obtain a strongly polynomial-time
\mbox{$(2+\varepsilon)$-approximation} algorithm for
\eqref{eq:nsdp_ip} with constant upper bounds on the variables. This
captures in particular the interesting case in which we have
\emph{binary} constraints for $x$.

Finally, we also mention the work of Garg \& Khandekar~\cite{GK02} who
present a fully polyonimal-time approximation algorithm for the
fractional {\em Steiner forest} problem. The algorithm also applies to
the more general problem of finding a minimum-cost fractional hitting
set of a given collection of {\em clutters}. 

\paragraph*{Organization}

We first provide some more details on how to implement the iterative
rounding algorithm of Jain.  
We continue and provide a detailed description of
\gk\ with approximate oracles in \autoref{sec:mult_weight} for
completeness, and describe the shortest-row oracles in
\autoref{sec:shortest_row}. Finally, in \autoref{sec:wrap-up} we put together 
all ingredients to prove our main result.

\section{Iterative rounding}\label{sec:rounding}

Recall that Jain's key structural insight was
to observe that extreme points $x \in
\R^E_+$ of \eqref{eq:jain} have $x(e) \geq 1/2$ for at least one $e \in
E$. 
Jain also noted that, in an implementation of his algorithm, the
computation of extreme points may be circumvented. In fact, he
suggests obtaining LP \lpg\ from \eqref{eq:jain} by adding the
constraint $x(g) \geq 1/2$ for some edge $g \in E$. 
Let $\opt_g$ be the objective function value of an optimum solution to
\lpg. Jain's structural lemma now implies that $\min_{g \in E}\opt_g$
is at most the optimum value of \eqref{eq:jain}. 
Jain's algorithm can now be implemented by replacing the computation
of an optimum basic solution to the residual problem in each
iteration, by computing optimal solutions to linear
programs of type \lpg\ for all edges $g \in E$. 

Of course, we can also replace computing an optimal solution to \lpg\ 
with computing an approximate one, at the expense of a slight increase  
of the final approximation factor. Jain's method in this case 
is summarized for completeness in \autoref{alg:jain}.

\begin{algorithm}
	\caption{A $2(1+\zeta)^{|E|}$- approximation algorithm for 
\eqref{eq:nsdp_ip}}
	\label{alg:jain}
	\begin{algorithmic}[1]
	\State $I_0\gets\varnothing$, $z_0(e)\gets 0$ for all  $e\in E$, 
$k\gets 0$
	\While{$f(S)-z_k(\delta(S))>0$ for some $S\subseteq 
V$}\label{state:app_while}
		\State $k\gets k+1$
		\ForAll{$g\in E\setminus I_{k-1}$} 
			\State \label{state:app_sol} find a
                        $(1+\zeta)$-approximation $x_{k,g} $ for \lpg\
                        with $z:= z_{k-1}$ and $I:= I_{k-1}$

		\EndFor
		\State let $x_k$ be a vector $x_{k,g}$ corresponding to 
$\min_{g\in E\setminus I_{k-1}} 
\sum_{e\in E} c(e) x_{k,g}(e)$
		\State $I_k\gets I_{k-1}\cup\{e\in E\setminus
                I_{k-1}\, | x_k(e)=0\,\text{ or }\, x_k(e)\ge 1/2 \}$
		\State \label{state:round} 
                for all $e\in E$, let $z_{k}(e)\gets\lceil x_k(E) \rceil$
                if $x_k(e) \geq 1/2$, and $z_k(e)\gets z_{k-1}(e)$ otherwise
	\EndWhile\\
	\Return $z_k$
	\end{algorithmic}
\end{algorithm}

\begin{claim}\label{cl:main}
Given $\zeta>0$, \autoref{alg:jain} is a 
$2(1+\zeta)^{|E|}$-approximation algorithm for \eqref{eq:nsdp_ip}. Moreover, 
\autoref{alg:jain} terminates after at most $|E|$ iterations of
step~\ref{state:app_while}.
\end{claim}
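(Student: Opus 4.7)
The plan is to mirror Jain's iterative-rounding analysis while carrying the multiplicative $(1+\zeta)$ slack introduced by the approximate LP solver through each iteration, and to absorb the compounded slack via a geometric series.

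Termination is immediate. In every iteration $k$, the vector $x_k$ is the chosen approximate solution to LP$_g$ for some $g\in E\setminus I_{k-1}$, and LP$_g$ contains the constraint $x(g)\geq 1/2$. Thus $x_k(g)\geq 1/2$, so $g$ qualifies for $I_k$ at step~8, giving $|I_k|>|I_{k-1}|$. This forces the while loop to exit after at most $|E|$ iterations.

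For the approximation ratio, let $\opt_k$ denote the optimum of \eqref{eq:jain} in iteration $k$ (with $z=z_{k-1}$, $I=I_{k-1}$) and set
\[
  C_k := \sum_{e\in E} c(e)\,x_k(e), \qquad R_k := \sum_{e:\,x_k(e)\geq 1/2} c(e)\,x_k(e).
\]
Three per-iteration observations do the work. \emph{(i)} By Jain's structural lemma, some extreme point $x^\star$ of the iteration-$k$ residual LP has a coordinate $e^\star$ with $x^\star(e^\star)\geq 1/2$; then $x^\star$ is feasible for LP$_{e^\star}$, so the corresponding $\opt_{e^\star}$ is at most $\opt_k$, and because step~7 picks the cheapest approximate solution over all choices of $g$, we obtain $C_k\leq(1+\zeta)\opt_k$. \emph{(ii)} Since $\lceil y\rceil\leq 2y$ whenever $y\geq 1/2$, the cost added at step~9 satisfies $c(z_k)-c(z_{k-1})\leq 2R_k$. \emph{(iii)} The truncation $\hat x_k$ defined by $\hat x_k(e):=x_k(e)$ when $x_k(e)<1/2$ and $\hat x_k(e):=0$ otherwise is feasible for the iteration-$(k+1)$ residual LP (a routine check using $z_k(e)\geq x_k(e)$ on edges rounded up at step~9), yielding $\opt_{k+1}\leq c(\hat x_k)=C_k-R_k$.

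Combining (i)--(iii) gives both $R_k\leq(1+\zeta)\opt_k-\opt_{k+1}$ and $\opt_{k+1}\leq(1+\zeta)\opt_k$, so $\opt_k\leq(1+\zeta)^{k-1}\opt_1$. Summing and telescoping,
\[
  c(z_K)\;\leq\;2\sum_{k=1}^K R_k\;\leq\;2\Bigl((1+\zeta)\opt_1+\zeta\sum_{k=2}^K\opt_k-\opt_{K+1}\Bigr)\;\leq\;2(1+\zeta)^K\,\opt_1,
\]
where the last inequality substitutes the geometric bound on $\opt_k$ and uses $\zeta\sum_{k=2}^K(1+\zeta)^{k-1}=(1+\zeta)^K-(1+\zeta)$. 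Since $K\leq|E|$ and $\opt_1$ is a lower bound on the optimum of \eqref{eq:nsdp_ip}, the stated ratio $2(1+\zeta)^{|E|}$ follows. The only real subtlety is that the per-iteration slack is multiplicative rather than additive, so it compounds across iterations; the geometric bound $\opt_k\leq(1+\zeta)^{k-1}\opt_1$ is exactly what absorbs this compounding.
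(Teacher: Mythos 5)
Your proof is correct and follows essentially the same structure as the paper's: Jain's structural lemma forces $I_{k-1}\subsetneq I_k$ (your version, via the hard constraint $x(g)\ge 1/2$ in LP$_g$, is if anything a bit cleaner), newly fixed variables satisfy $z_k(e)\le 2x_k(e)$, the truncated $x_k$ is feasible for the next residual LP, and the $(1+\zeta)$ factors compound over at most $|E|$ iterations. The paper merely asserts the resulting $2(1+\zeta)^{|E|}$ bound without displaying the telescoping/geometric-series calculation that you spell out in (i)--(iii) and the final chain of inequalities; that explicit accounting is the only substantive difference.
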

\begin{proof}
  Jain's structural lemma immediately implies that
  $I_{k-1}\subsetneq I_{k}\subseteq E$ for every $k$. 
  Hence the total
  number of iterations is at most $|E|$. In iteration $k$ we fix the
  values of variable $x(e)$, $e\in I_{k}\setminus I_{k-1}$, and due to the 
definition of $I_{k}\setminus I_{k-1}$ we
  have $z_{k}(e)\le 2 x_{k}(e)$, $e\in I_{k}\setminus I_{k-1}$. The
  remaining values $x_k(e)$, $e\in E\setminus I_k$ form a valid
  solution for~\eqref{eq:jain} with $z(e):=z_k(e)$, $e\in E$, which is
  solved with approximation guarantee $(1+\zeta)$ in the $(k+1)$-st
  iteration. Since there are at most $|E|$ iterations and in
  step~\ref{state:app_sol} the found solution is a
  \mbox{$(1+\zeta)$-approximation} of \lpg, we know that
  the objective value of the output is at most $2(1+\zeta)^{|E|}$
  times the objective value of the linear relaxation
  of~\eqref{eq:nsdp_ip}, finishing the proof. 
\end{proof}

\noindent
Note that by \autoref{cl:main}, if $\zeta\le\ln(1+\varepsilon)/|E|$ then 
\autoref{alg:jain} gives a $2(1+\varepsilon)$-approximation for 
\eqref{eq:nsdp_ip}. 

\section{Multiplicative weights method}\label{sec:mult_weight}

In this section, we briefly review the multiplicative weights 
method~\cite{Garg} of \gk, when applied to a positive
covering LP of the form
\begin{align}
	\min ~&\sum_{j\in[n]} c(j) x(j) 
\label{eq:cover_problem}\tag{$\mbox{LP}_2$}\\
	\st~&\sum_{j\in [n]}A(i,j) x(j) \ge b(i)\quad \forall i\in[m],
	\notag\\ 
	&x\ge 0\,, \notag
\end{align}
where $A(i,j)\ge 0$ for all $i\in[m]$, $j\in [n]$, $b(i)>0$ for all $i\in[m]$ and $c(j)>0$ for all 
$j\in[n]$. Note, that the linear program \lpg\ is a positive LP of the
above form when simply eliminating variables $x(e)$ for $e \in I$. 
In the same way, we can exclude the inequalities corresponding to 
$S\subseteq V$ with $f(S)-z(\delta(S))\le 0$.

Given $i\in[m]$ and a vector $x\in \R_{+}^n$ we 
define the \emph{length} $\len(i,x)$ of row $i$ with respect to~$x$ as
\begin{equation}\label{eq:len}
	\len(i,x):=\sum_{j\in[n]} A(i,j)x(j)/b(i)\,,
\end{equation}
and we denote by 
$\len(x)$ the shortest length of a row in $A$ with respect to $x$, i.e.\ 
$\len(x):=\min_{i\in [m]} \len(i,x)$.
Now it is straightforward to reformulate~\eqref{eq:cover_problem} as
\begin{equation}\label{eq:cover_problem_ref}
	\min_{x\ge 0, x\neq 0}\sum_{j\in[n]} c(j) x(j)/\len(x)\,.
\end{equation}

The multiplicative weights method of \gk\ applied to the dual
of~\eqref{eq:cover_problem} computes an approximate pair of primal and
dual solutions in strongly-polynomial time, as long as it is provided
with a strongly-polynomial time oracle for determining the row $q$ of
shortest length (the \emph{shortest row}) with respect to given
lengths $x\in \R^n_+$ as in \eqref{eq:len}. 

It is implicit in the work of \cite{Garg, Fleischer} that exact
oracles can be replaced by approximate ones (incurring a corresponding
degradation in performance ratio, of course).  Such a modification is
described from a packing point of view in \cite{EMR15}, for example. 
\autoref{alg:mult_weight} shows the pseudo code of the algorithm
for completeness. In step~\ref{state:approx} of the algorithm a 
\mbox{$(1+\zeta)$-approximation} $q$ of the shortest row with respect to 
some vector $x\in\R_{+}^n$ is computed. That is, $q$ is a row for which 
$\len(q,x)\leq(1+\zeta)\len(x)$. \autoref{sec:shortest_row} describes how 
to obtain this approximation in strongly-polynomial time. 
\begin{algorithm}
	\caption{The multiplicative weights algorithm to 
solve~\eqref{eq:cover_problem}.}
		\label{alg:mult_weight}
	\begin{algorithmic}[1]
	\State $\delta\gets(1+\zeta)\big((1+\zeta)n\big)^{-\frac{1}{\zeta}}$, 
$x_0(j)\gets \delta/c(j)$ for all $j\in[n]$, $y_0(i)\gets 0$ for all $i\in[m]$, 
$k\gets 0$
	\While{$\sum_{j\in[n]} c(j) x_k(j)<1$}\label{state:mult_app}
		\State $k\gets k+1$
		\State determine a $(1+\zeta)$-approximation for the shortest 
row with respect to $x_{k-1}$, let it be row $q_k$ \label{state:approx}
		\State determine $j\in[n]$ with the minimum value 
$c(j)/A(q_k,j)$, let it be column $p_k$
		\ForAll{$i\in [m]$}
$$y_{k}(i)\gets
						\begin{cases} 
							
y_{k-1}(i)+c(p_k)/A(q_k,p_k) & \text{if}\quad i=q_k\\
							
y_{k-1}(i)&\text{otherwise}\,.
						\end{cases}$$ 
		\EndFor
		\State $x_{k}(j)\gets  \big( 1+\zeta 
\frac{c(p_k)A(q_k,j)}{c(j)A(q_k,p_k)} \big)x_{k-1}(j)$ for all $j\in [n]$
	\EndWhile\\
	\Return $x_k/\len(x_k)$ corresponding to $\min_{k}\sum_{j\in[n]} c(j) 
x_k(j)/\len(x_k)$
	\end{algorithmic}
\end{algorithm}
We give a proof of the next lemma for completeness. 

\begin{lemma}[implicit in \cite{Garg,Fleischer}] 
\label{cl:mult_weight}
\autoref{alg:mult_weight} is a $(1+4\zeta)$-approximation 
for~\eqref{eq:cover_problem}. Moreover, \autoref{alg:mult_weight} 
terminates after at most~$\frac{1}{\zeta}\log_{1+\zeta}(1+\zeta)n$ iterations.
\end{lemma}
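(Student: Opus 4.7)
The plan is to adapt the standard Garg--K\"onemann analysis, injecting a factor of $(1+\zeta)$ wherever the approximate shortest-row oracle from step~\ref{state:approx} is invoked, and to exploit the specific calibration of $\delta$ fixed in the initialisation of the algorithm. Throughout I write $C_k := \sum_{j\in[n]} c(j)x_k(j)$ for the current primal objective, $D_k := \sum_{i\in[m]} b(i)y_k(i) = \sum_{l\le k}\gamma_l$ for the current dual objective, where $\gamma_k := b(q_k)c(p_k)/A(q_k,p_k)$.

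For the iteration bound, the choice of $p_k$ as a column minimising $c(j)/A(q_k,j)$ forces $c(p_k)A(q_k,j)/(c(j)A(q_k,p_k))\le 1$ for every $j$, so the multiplicative factor applied to each $x(j)$ in the update lies in $[1, 1+\zeta]$; for $j=p_k$ it equals exactly $1+\zeta$. Thus $c(j)x_k(j)$ starts at $\delta$ and is multiplied by $1+\zeta$ each time $j$ is selected as some $p_l$, so a pigeonhole argument on the column selected most often, combined with the stopping condition $C_k\ge 1$, yields the advertised iteration bound once the value of $\delta$ is substituted.

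For the approximation ratio, directly expanding the update rule gives the primal recurrence
$$C_k - C_{k-1} \;=\; \zeta\gamma_k\len(q_k, x_{k-1}) \;\le\; \zeta(1+\zeta)\gamma_k\len(x_{k-1}),$$
where the inequality is the single place the approximate oracle enters. A short weak-duality computation (pair $x$ with an optimum dual solution) gives $\len(x)\cdot\opt \le \sum_j c(j)x(j)$ for every $x\ge\0$, so writing $\mu := \min_k C_k/\len(x_k)$ for the value of the output, we have $\len(x_{k-1})/C_{k-1}\le 1/\mu$ at each step. Taking logarithms, using $\ln(1+z)\le z$, and telescoping yields $\ln(C_K/C_0)\le \zeta(1+\zeta)D_K/\mu$. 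On the dual side, the convexity estimate $1+r\ge (1+\zeta)^{r/\zeta}$ valid for $r\in[0,\zeta]$, applied to the telescoping product expression for $x_K(j)$, gives $c(j)x_K(j)\ge \delta(1+\zeta)^{\sum_i A(i,j)y_K(i)/c(j)}$, so that $\beta := \max_j \sum_i A(i,j)y_K(i)/c(j)\le \log_{1+\zeta}(C_K/\delta)$. Hence $y_K/\beta$ is dual-feasible and, by weak duality, $D_K\le\beta\cdot\opt$; combining the two bounds produces $\mu\le \zeta(1+\zeta)\beta\cdot\opt/\ln(C_K/C_0)$.

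The main obstacle is the final quantitative estimate: showing that, with $\delta=(1+\zeta)((1+\zeta)n)^{-1/\zeta}$ and the terminal bounds $1\le C_K<1+\zeta$ (the upper bound coming from $C_k\le (1+\zeta)C_{k-1}$ together with $C_{K-1}<1$), the coefficient $\zeta(1+\zeta)\beta/\ln(C_K/C_0)$ collapses to at most $1+4\zeta$. This reduces to bounding the ratio $\log_{1+\zeta}(C_K/\delta)/\ln(C_K/(n\delta))$, which by the particular form of $\delta$ lies close to $1/\ln(1+\zeta)$; combining this with the standard estimate $\zeta/\ln(1+\zeta)\le 1/(1-\zeta/2)$ and simplifying delivers the factor $1+4\zeta$ with a little room to spare. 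The role of the peculiar form $(1+\zeta)((1+\zeta)n)^{-1/\zeta}$ chosen for $\delta$ is precisely to make this calibration work out cleanly.
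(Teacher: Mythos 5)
Your overall plan is the right one and mirrors the paper's: bound the primal objective $C_k$ from above via the update recurrence and the definition of $\mu$, bound the dual load $\sum_i A(i,j)y_K(i)/c(j)$ from below to certify near-feasibility of a scaled $y_K$, and then numerically calibrate with the particular~$\delta$. Your dual-feasibility derivation is in fact a small improvement on the paper's: instead of first proving an iteration count and then observing that each iteration adds at most one to $\sum_i A(i,j)y_k(i)/c(j)$, you apply $1+r\ge(1+\zeta)^{r/\zeta}$ to the product form of $x_K(j)$ and conclude $\beta\le\log_{1+\zeta}(C_K/\delta)$ directly, decoupling the dual estimate from the iteration count. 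The approximation-ratio half of your argument is correct and is essentially the paper's proof.

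The genuine gap is in your iteration-bound paragraph. You assert that a pigeonhole argument on ``the column selected most often'' yields the advertised bound $\frac{1}{\zeta}\log_{1+\zeta}\big((1+\zeta)n\big)$. Work it out: if the algorithm runs for $T$ iterations then some column $j$ has $p_k=j$ in at least $T/n$ of them, giving $c(j)x_T(j)\ge\delta(1+\zeta)^{T/n}$, and since the termination test forces $c(j)x_T(j)<1+\zeta$ this yields $T< n\,\log_{1+\zeta}\big(\tfrac{1+\zeta}{\delta}\big)=\frac{n}{\zeta}\log_{1+\zeta}\big((1+\zeta)n\big)$. That is a factor of $n$ larger than the bound you (and the lemma) advertise. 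There is no way for the pigeonhole step you describe to shave off that factor, because the $(1+\zeta)$ boost only falls on the single coordinate $p_k$ in each iteration while the other coordinates may stay flat, and the termination criterion is a sum. (The paper's own one-sentence justification, which looks at a single fixed coordinate and notes $x_0(j)=\delta/c(j)$ and $x_t(j)<(1+\zeta)/c(j)$, is incomplete for exactly the same reason, but that does not make your sketch correct: you need either a different argument for the stated bound, or to state and prove the weaker $O\!\big(\tfrac{n}{\zeta}\log_{1+\zeta}((1+\zeta)n)\big)$ bound, which is what the standard Garg--K\"onemann/Fleischer analysis actually gives.) Note that your own dual-feasibility derivation already sidesteps the need for the iteration count in proving the approximation ratio, so the gap only concerns the ``moreover'' clause of the lemma and the downstream running-time accounting.
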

\begin{proof}
Let us define $\beta:=\min_{k}\frac{\sum_{j\in[n]} c(j) x_k(j)}{\len(x_k)}$, 
below we show that $\beta$ provides a good approximation for the 
problem given by~\eqref{eq:cover_problem_ref}. 

For every $k\ge 1$ we have
\begin{multline*}
	\sum_{j\in[n]} c(j) x_k(j)- \sum_{j\in[n]} c(j) x_{k-1}(j)=\\
	\zeta \len(q_k,x_{k-1})c(p_k) b(q_k)/A(q_k,p_k)\le
	\zeta (1+\zeta) \len(x_{k-1})\times \sum_{i\in[m]}b(i)(y_{k}(i)-y_{k-1}(i)) 
\,.
\end{multline*}

Hence,
\[
	\sum_{j\in[n]} c(j) x_k(j) \le \sum_{j\in[n]} c(j) x_0(j)+ \zeta 
(1+\zeta)\times \sum_{h\in [k]} \sum_{i\in[m]}b(i)(y_{h}(i)-y_{h-1}(i)) 
\len(x_{h-1})\,.
\]
Due to the definition of $\beta$ we have $\len(x_{h-1})\le \sum_{j\in[n]} c(j) 
x_{h-1}(j)/\beta$ and thus
\[\label{eq:induct_step}
\sum_{j\in[n]} c(j) x_k(j) \le\\ \sum_{j\in[n]} c(j) x_0(j)+\frac{ \zeta 
(1+\zeta)}{\beta}\times \sum_{h\in [k]} 
\sum_{i\in[m]}b(i)(y_{h}(i)-y_{h-1}(i))\sum_{j\in[n]} c(j) x_{h-1}(j)\,.
\]

To show that the right-hand side  of \eqref{eq:induct_step} is at 
most $n\delta \me^{\zeta (1+\zeta) \sum_{i\in[m]}b(i)y_{k}(i)/\beta}$, we use 
induction. Indeed, 
the case $k=0$ is clear, and to show the statement consider
\[
	\sum_{j\in[n]} c(j) x_0(j)+ \frac{\zeta(1+\zeta)}{\beta}\times
	\sum_{h\in [k]}\sum_{i\in[m]}b(i)(y_{h}(i)-y_{h-1}(i))\sum_{j\in[n]} 
c(j) x_{h-1}(j)\,,
\]
which equals
\begin{multline*}
	\sum_{j\in[n]} c(j) x_0(j)+ \frac{\zeta(1+\zeta)}{\beta}\times 
	\Big(\sum_{h\in [k-1]}\sum_{i\in[m]}b(i)(y_{h}(i)-y_{h-1}(i))
	\sum_{j\in[n]} c(j) x_{h-1}(j)+ \\
\sum_{i\in[m]}b(i)\big(y_{k}(i)-y_{k-1}(i)\big) \sum_{j\in[n]} c(j) 
x_{k-1}(j)\Big)
\end{multline*}
Due to~\eqref{eq:induct_step} we conclude that the last expression is at most
\begin{multline*}
\Big(1+\frac{\zeta(1+\zeta)}{\beta}\sum_{i\in[m]}b(i)\big(y_{k}(i)-y_{k-1}
(i)\big)\Big)\times
	\Big(\sum_{j\in[n]} c(j) x_0(j)+ \frac{\zeta(1+\zeta)}{\beta}\times\\
	\sum_{h\in [k-1]} 
\sum_{i\in[m]}b(i)\big(y_{h}(i)-y_{h-1}(i)\big)\sum_{j\in[n]} c(j) 
x_{h-1}(j)\Big)\,.
\end{multline*}
Using the inequality $(1+\alpha)\le \me^\alpha$, $\alpha\in\R$ and the 
induction hypothesis we upper-bound the expression above by
\[
	\me^{\zeta(1+\zeta) 
          \sum_{i\in[m]}b(i)\big(y_{k}(i)-y_{k-1}(i)\big)/\beta}\times
        n\delta 
        \me^{\zeta(1+\zeta)  \sum_{i\in[m]}b(i)y_{k-1}(i)/\beta}=
	n\delta \me^{\zeta(1+\zeta)  \sum_{i\in[m]}b(i)y_{k}(i)/\beta}\,.
\]

Now let us consider the last iteration $t$, where we have
$$
 1 \le \sum_{j\in[n]} c(j) x_t(j) \le n\delta \me^{\zeta (1+\zeta) 
\sum_{i\in[m]}b(i)y_{t}(i)/\beta}\,,
$$
and thus
\begin{equation}\label{dual primal ratio}
	\frac{\beta}{\sum_{i\in[m]}b(i)y_{t}(i)}\le 
\frac{\zeta(1+\zeta)}{\ln((n\delta)^{-1})}
\end{equation}
whenever $n\delta<1$.

Now let us show that $y_t/\log_{1+\zeta}\big(\frac{1+\zeta}{\delta} \big)$ is a 
feasible solution for the dual of~\eqref{eq:cover_problem}. It is enough to 
show that 
$$
	\max_{j}\sum_{i\in[m]} \frac{A(i,j) y_t(i)}{c(j)}\le 
\log_{1+\zeta}\frac{1+\zeta}{\delta}\,.
$$
To see this note that for every $j\in[n]$ and every $k$
\[
	\sum_{i\in[m]} \frac{A(i,j) y_k(i)}{c(j)}-\sum_{i\in[m]} \frac{A(i,j) 
y_{k-1}(i)}{c(j)}=\frac{A(q_k,j)}{c(j)}\frac{c(p_k)}{A(q_k,p_k)}\leq 1\,,
\]
and $\sum_{i\in[m]} \frac{A(i,j) y_{0}(i)}{c(j)}=0$. On the other hand for 
every $j\in[n]$ and every $k$
$$
\frac{x_k(j)}{x_{k-1}(j)}=1+\zeta \frac{c(p_k)A(q_k,j)}{c(j)A(q_k,p_k)}\leq 
1+\zeta\,,
$$
$x_0(j)=\delta/c(j)$ and due to the termination condition $x_{t-1}(j)<1/c(j)$ 
and hence $x_{t}(j)<(1+\zeta)/c(j)$. This implies that the algorithm terminates 
after at most $\log_{1+\zeta}\frac{1+\zeta}{\delta}$ iterations. Thus, $y_t/\log_{1+\zeta}\big(\frac{1+\zeta}{\delta} \big)$ is a feasible solution for the dual of~\eqref{eq:cover_problem}.

Hence, the algorithm provides a feasible solution 
for~\eqref{eq:cover_problem_ref} with value $\beta$, which is an approximation 
with guarantee
$$
\frac{\zeta(1+\zeta)}{\ln((n\delta)^{-1})}\log_{1+\zeta}\frac{1+\zeta}{\delta}
=\frac{\zeta(1+\zeta)}{\ln (1+\zeta)}\frac{\ln 
\frac{1+\zeta}{\delta}}{\ln((n\delta)^{-1})}\,,
$$
due to~\eqref{dual primal ratio} and the fact that $y_t/\log_{1+\zeta}\big(\frac{1+\zeta}{\delta} \big)$ is a 
feasible solution for the dual of~\eqref{eq:cover_problem}.
Thus, we obtain
\[
\frac{\zeta(1+\zeta)}{\ln (1+\zeta)}\frac{\ln 
\frac{1+\zeta}{\delta}}{\ln((n\delta)^{-1})}= 
\frac{\zeta(1+\zeta)}{(1-\zeta)\ln (1+\zeta)}\le
  \frac{\zeta(1+\zeta)}{(1-\zeta)(\zeta-\zeta^2/2)}\le 
\frac{1+\zeta}{(1-\zeta)^2}\,,
\]
which is at most $(1+4\zeta)$ for $\zeta\le 0.15$.
\end{proof}

\section{The shortest row problem}\label{sec:shortest_row}

In this section we describe how to (approximately) solve the shortest row 
problem needed in 
\autoref{alg:mult_weight} when applied to \eqref{eq:jain}. We start by 
stating the following simple remark,
that we will need at the end of our analysis.

\begin{rem}\label{rem:function_values}
For  every $k\ge 1$ and every $S\subseteq V$, we have
$f(S)-z_k(\delta(S))\le |E|/2$ in \autoref{alg:jain}.
\end{rem}
\begin{proof}
Define $x \in \R^E_+$ by letting $x(e):=x_k(e)$ whenever $x_k(e)<1/2$,
and let $x(e):=0$ otherwise. Then note that 
$  |E|/2 \ge x(\delta(S)) \ge f(S)-z_k(\delta(S))$,
due to the feasibility of $x$ in~\eqref{eq:jain} with $z:=z_k$ and $I:=I_k$. 
\end{proof}

Let us recall that, for given $x \in \R^E_+$, $z \in \Z^E_+$, and proper 
function $f$, the shortest row problem we need to solve is the following:
\[
	\min_{\substack{f(S)-z(\delta(S))\ge 1\\S\subseteq V}} 
\frac{x(\delta(S))}{f(S)-z(\delta(S))}.
\]

\noindent
The above shortest row problem is quite easy to solve when $z=\0$. 
In this case, Gabow et al.~\cite{Gabow} 
give a strongly-polynomial time separation oracle based on 
the construction of \emph{Gomory-Hu trees}~\cite{GomoryHu}, as we are now going 
to explain.

Given a graph $G=(V,E)$ and values $x(e)\in\R_+$ for each $e\in E$, a 
\emph{Gomory-Hu tree}~\cite{GomoryHu} is a capacitated tree $T=(V,J)$ such that 
for any two 
vertices $v,u\in V$ the minimum $x$-value of a cut in $G$ 
separating $v$ and $u$ equals the minimum $x$-value among the $u$-$v$ cuts 
induced by the edges of $T$. More 
concretely, let 
$S_e$ and $V\setminus S_e$ induce connected components in $T$ after removing 
$e$ from $T$. We then have
$$
	\min_{\substack{u\in S, v\not\in S\\S\subseteq V}} x(\delta(S))= 
\min_{\substack{u\in S_e, v\not\in S_e\\e\in J}} x(\delta(S_e))\,.
$$


The next lemma shows that in order to find the shortest row in the first 
iteration of step~\ref{state:app_while} in \autoref{alg:jain} (i.e. when 
$z=\0$), it is 
enough to compute a Gomory-Hu tree with respect to values $x(e)\in\R_+$, 
$e\in E$.
\begin{lemma}[\cite{Gabow}]\label{lem:gabow}
Given a graph $G=(V,E)$, a proper function $f:2^V\rightarrow \Z_+$ and a 
Gomory-Hu tree $T=(V,J)$ with respect to values $x(e)\in\R_+$, $e\in E$, we have
$$
	\min_{\substack{f(S)\neq 0\\S\subseteq V}} 
x(\delta(S))/f(S)=\min_{\substack{f(S_e)\neq 0\\e\in J}} 
x(\delta(S_e))/f(S_e)\,.
$$
\end{lemma}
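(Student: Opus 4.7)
The inequality ``$\leq$'' in the lemma is immediate, since every edge $e\in J$ with $f(S_e)\neq 0$ contributes a term on the right-hand side that also appears on the left-hand side (take $S:=S_e$). The plan is therefore to prove the nontrivial direction ``$\geq$'': starting from an arbitrary $S\subseteq V$ with $f(S)\neq 0$, I will exhibit an edge $e\in J$ with $f(S_e)\neq 0$ and $x(\delta(S_e))/f(S_e)\leq x(\delta(S))/f(S)$.

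Fix such an $S$, and write $\delta_T(S)\subseteq J$ for the set of tree edges crossing $S$. The plan is to find the desired edge inside $\delta_T(S)$ by a two-step ``zoom-in'' argument on the tree structure. First, let $V_1,\dots,V_l$ be the connected components of $T\setminus\delta_T(S)$; each $V_i$ is entirely inside $S$ or entirely inside $V\setminus S$, and the $V_i$'s contained in $S$ partition $S$. Iterating maximality over this partition yields a component $V^*\subseteq S$ with $f(V^*)\geq f(S)$. Second, apply symmetry to obtain $f(V\setminus V^*)=f(V^*)\geq f(S)$, and observe that, since $V^*$ is connected in $T$, the forest $T\setminus V^*$ partitions $V\setminus V^*$ into subtrees $W_1,\dots,W_r$, one per tree edge incident to $V^*$. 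Iterating maximality once more produces a component $W^*$ with $f(W^*)\geq f(V\setminus V^*)\geq f(S)$, and I take $e^*$ to be the unique $T$-edge joining $V^*$ and $W^*$.

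It remains to verify that $e^*$ has the required properties. Because $V^*$ is a full component of $T\setminus\delta_T(S)$, every $T$-edge leaving $V^*$ lies in $\delta_T(S)$; in particular $e^*\in\delta_T(S)\subseteq J$, and its two endpoints lie on opposite sides of the cut $S$. Since $T$ is a tree and $e^*$ is the only edge between the connected sets $V^*$ and $W^*$, removing $e^*$ from $T$ disconnects $W^*$ from everything else, so $S_{e^*}=W^*$; hence $f(S_{e^*})=f(W^*)\geq f(S)>0$. Finally, the Gomory--Hu property applied to the two endpoints of $e^*$ (whose $T$-path is just $\{e^*\}$) gives $x(\delta(S_{e^*}))=\min\{x(\delta(U)):U\text{ separates these endpoints}\}\leq x(\delta(S))$, and combining this with the previous bound on $f(S_{e^*})$ yields the required inequality on the ratios.

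The main obstacle is lining up the two nested applications of maximality with the tree structure, so that the component $V^*$ picked in the first step forces the edge $e^*$ constructed in the second step to lie in $\delta_T(S)$. The key subtlety is to choose $V^*$ as an entire connected component of $T\setminus\delta_T(S)$ (rather than just some subset of $S$), so that its boundary in $T$ is automatically contained in $\delta_T(S)$. Once this is handled, symmetry and maximality deliver the bound on $f(S_{e^*})$ while the Gomory--Hu property delivers the bound on $x(\delta(S_{e^*}))$, and the two combine into the ratio comparison that proves the lemma.
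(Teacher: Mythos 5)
Your proof is correct and follows essentially the same two-step argument as the paper's: partition one side of the cut into tree components, apply maximality (and symmetry) to extract a component with large $f$-value, then partition the complement of that component into the sets $S_e$ over its incident tree edges and apply maximality once more, finishing with the Gomory--Hu property to bound $x(\delta(S_{e^*}))$ by $x(\delta(S))$. The only cosmetic difference is that the paper partitions $V\setminus S$ in the first step (taking components of $T$ after deleting the vertices in $S$) while you partition $S$; by symmetry of $f$ these are interchangeable, and both proofs hinge on the observation that $\delta_T(V^*)\subseteq\delta_T(S)$ for the chosen component.
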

\begin{proof}
Consider $S\subseteq V$ and the edges $\delta_T(S)$ in the Gomory-Hu tree $T$ 
defined by $S$. By definition of a Gomory-Hu tree $x(\delta(S))\ge 
x(\delta(S_e))$ for every \mbox{$e\in \delta_T(S)$}, due to the cut in $T$ 
incurred by the vertices incident to $e$. Thus, to prove the claim it is enough 
to show that 
\begin{equation}\label{eq:gomory_hu_proper}
	f(S)\le \max_{e\in \delta_T(S)} f(S_e)\,.
\end{equation}
To show the last inequality let $V_1$,\ldots,$V_k$ be vertex sets of the 
connected components after  removing $S$ in $T$. Thus, $V_1$,\ldots, $V_k$ form 
a partition of $V\setminus S$, and so
$\max_{i\in [k]} f(V_i) \ge f(V\setminus S)=f(S)$.
Choose $i\in [k]$. Replacing $S_e$ by $V\setminus S_e$, we can assume that 
$S_e$ and $V_i$ are disjoint for every $e\in\delta_T(V_i)$. Thus the sets 
$S_e$ with $e\in \delta_T(V_i)$ partition $V\setminus V_i$, showing that
$f(V_i)\le  \max_{e\in\delta_T(V_i)} f(S_e).$
Since, $\delta_T (V_i)$, $i\in [k]$ partition $\delta(S)$ we 
get~\eqref{eq:gomory_hu_proper}, finishing the proof.
\end{proof}

In the later iterations of steps~\ref{state:app_while} in
\autoref{alg:jain}, the inequality corresponding to $S\subseteq V$ has 
the form $x(\delta(S))\ge g(S)$, where $g:2^V\rightarrow \Z_+$  is such that
$g(S) = f(S)-z(\delta(S))$ for some $z(e)\in\Z_+$, $e\in E$, and a proper 
function $f$. Once $z\neq\0$, $g(S)$ is not a proper function any more and 
unfortunately, Gabow et al.'s algorithm can not be used directly. We do not 
know how to solve this problem \emph{exactly} in strongly-polynomial time,
 but we can approximate it using the following observation. 

Fix a value $\gamma >0$, and let us check
whether the optimal solution of the shortest row problem has 
a value less than $\gamma$. The crucial fact is that given $x(e)\in\R_+$, $e\in 
E$ and $\gamma>0$ checking 
whether
$$
	\min_{\substack{f(S)-z(\delta(S))\ge 1\\S\subseteq V}} 
\frac{x(\delta(S))}{f(S)-z(\delta(S))} < \gamma\,.
$$
is equivalent to checking whether 
$$
	x(\delta(S))/\gamma+ z(\delta(S)) < f(S)
$$
for some $S\subseteq V$, i.e.\ it can be reduced to finding
$$
	\min_{\substack{f(S)\neq 0\\S\subseteq V}} 
\frac{x(\delta(S))/\gamma + z(\delta(S))}{f(S)}.
$$
Therefore, we can apply \autoref{lem:gabow} after replacing $x(e)$ with 
$x(e)/\gamma+z(e)$. 
This enables us to use binary search to find a $(1+\zeta)$-approximation for 
the shortest row indexed 
by vertex subsets whenever we have a lower bound $\gamma_{\min}$ and an upper 
bound $\gamma_{\max}$ on the length of the shortest row. Giving trivial bounds 
on such a value (e.g. $1$ and $(|E| \cdot \max_{S} f(S))$) 
is of course easy. However, given an interval $[\gamma_{\min}, \gamma_{\max}]$ 
for binary search we have to construct a Gomory-Hu tree $\lceil\log_{1+\zeta} 
\gamma_{\max}/\gamma_{\min}\rceil$ times, and therefore we need
that $\gamma_{\max}/\gamma_{\min}$ is independent of the size
of $f$ in order to achieve strong polynomiality. To this aim, we propose 
\autoref{alg:upper_lower_bounds}. 

\begin{algorithm}
	\caption{Determining upper and lower bounds $\gamma_{\min}$, 
$\gamma_{\max}$.}
	\label{alg:upper_lower_bounds}
	\begin{algorithmic}[1]
	\State  $G_0=(V_0,E_0) \gets G=(V,E)$, $f_0(S)\gets f(S)$ for all 
$S\subseteq V$, $k\gets 0$
	\While{$f_{k}(S)-z(\delta_{G_k}(S))> 0$ for some $S\subseteq V_k$}
		\State find a set $S\subseteq V_{k}$ such that 
$f_{k}(S)-z(\delta_{G_{k}}(S))\ge 1$, let it be $S_k$
		\State determine $e\in \delta_{G_{k}}(S_k)$ with maximum 
$x(e)$, let it be $e_k\gets\{u_k,v_k\}$
		\State contract $e_k$ in $G_{k}$ (keeping multiple copies of 
edges) to obtain $G_{k+1}$, and set
		$$
			f_{k+1}(S)\gets\begin{cases}
					f_{k}(S\cup \{u_k,v_k\}\setminus 
w_k)&\text{if}\quad w_k\in S\\
					f_{k}(S)&\text{otherwise}\,,
				\end{cases}
		$$
	for all $S\subseteq V_{k+1}$, where $w_k$ is the vertex in $G_{k+1}$ 
corresponding to the contracted edge~$e_k$.
	\State $k\gets k+1$
	\EndWhile
	\State $\gamma_{\min}\gets \min_{k}2 x(e_k)/|E|$, and let $p$ be the 
index for which this minimum is achieved
	\State $\gamma_{\max}\gets x(\delta(U_p))$, where $U_p$ is the vertex 
subset of $V$ corresponding to the vertex subset $S_p$ of $V_p$\\
	\Return $\gamma_{\min}$, $\gamma_{\max}$
	\end{algorithmic}
\end{algorithm}

\begin{lemma}
\autoref{alg:upper_lower_bounds} computes an interval $[\gamma_{\min}, 
\gamma_{\max}]$, which contains the shortest row length with respect to 
$x(e)\in \R_+$, 
$e\in E$. Moreover, $\gamma_{\max}/\gamma_{\min}\le |E|^2/2$, and the algorithm 
runs in strongly-polynomial time.
\end{lemma}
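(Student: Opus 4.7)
The plan is to verify the three claims in turn: correctness of the bracketing interval, the ratio bound $|E|^2/2$, and strong polynomiality. Everything rests on an \emph{un-contraction correspondence}: starting from any $S\subseteq V_k$ and iteratively replacing the current contracted super-vertex by its two pre-images (only when it lies in the current set) produces a set $U\subseteq V$ whose boundary and $f$-value equal $\delta_{G_k}(S)$ and $f_k(S)$ respectively. This is because at each lifting step the two endpoints of $e_j$ end up on the same side of the lifted set by construction, so $e_j$ is not in its boundary, and the defining recursion of $f_{j+1}$ directly preserves the function value. A short direct check also shows that $f_k$ inherits symmetry, maximality and $f_k(V_k)=0$ from $f$, so $f_k$ stays proper on $V_k$. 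This properness is what lets Gabow et al.'s Gomory-Hu construction (\autoref{lem:gabow}) be applied to $f_k$ with edge weights $z$ in order to find a violated set $S$ with $f_k(S)>z(\delta_{G_k}(S))$, or certify none exists, in strongly polynomial time in each iteration.

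With the correspondence in hand, the upper bound is immediate: $U_p$ lifts $S_p$, so $f(U_p)-z(\delta(U_p))=f_p(S_p)-z(\delta_{G_p}(S_p))\ge 1$, making $U_p$ feasible for the shortest-row problem and forcing $\gamma^\ast\le x(\delta(U_p))=\gamma_{\max}$, where $\gamma^\ast$ denotes the true shortest-row value. For the lower bound, let $S^\ast$ attain $\gamma^\ast$. At least one contracted edge $e_k$ must lie in $\delta_G(S^\ast)$, because otherwise the correspondence would lift $S^\ast$ to a set $S^{\ast,(T)}\subseteq V_T$ with the same boundary and $f$-value, producing a violated cut in $G_T$ and contradicting termination of the loop. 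Combining $x(\delta(S^\ast))\ge x(e_k)\ge |E|\gamma_{\min}/2$ with \autoref{rem:function_values}, which bounds $f(S^\ast)-z(\delta(S^\ast))\le|E|/2$, yields $\gamma^\ast\ge 2x(\delta(S^\ast))/|E|\ge\gamma_{\min}$.

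The ratio bound then follows from $\gamma_{\max}=x(\delta_{G_p}(S_p))\le|\delta_{G_p}(S_p)|\cdot x(e_p)\le|E|\,x(e_p)=|E|^2\gamma_{\min}/2$, using that $e_p$ is the maximum-weight edge in its cut. Strong polynomiality holds because each contraction strictly decreases $|V_k|$, bounding the loop by $|V|-1$ iterations, and each iteration uses one Gomory-Hu computation plus a single scan over edge weights. The main obstacle I anticipate is isolating the un-contraction correspondence cleanly, namely verifying that $f_k$ remains proper and that boundaries and $f$-values are preserved under un-contraction; both the oracle step and the interval-bounding arguments collapse without these structural facts.
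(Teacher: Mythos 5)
Your proof is correct and follows essentially the same route as the paper: lower bound via the observation that every violated cut must contain one of the contracted edges and via \autoref{rem:function_values}, upper bound via feasibility of $U_p$, and the ratio bound via maximality of $x(e_p)$ in its cut. You are somewhat more explicit than the paper in isolating the un-contraction/projection correspondence (preservation of boundary and $f$-value) and in noting that $f_k$ remains proper so the violated-set oracle applies in every iteration---details the paper treats implicitly---but the argument is structurally identical.
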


\begin{proof}
\autoref{alg:upper_lower_bounds} works as follows. It does a sequence of 
at most $|V|$
iterations. In iteration $k$, it takes an arbitrary subset $S$ corresponding to 
a
violated cut, i.e. such that $f_{k}(S)-z(\delta_{G_k}(S)) >0$, and contracts 
the edge $e_k$ in this cut of maximum $x$-value. Contracting
this edge naturally yields a graph $G_{k+1}$ and a function $f_{k+1}$ to use in 
the next iteration. 
Note that a violated subset $S$ can be computed efficiently given that $f$ is a 
proper function~\cite{GW95}.

Our first claim is that $\gamma_{\min}$ is a valid lower bound on the shortest 
row length.
In other words, we claim that for every $S: f(S)-z(\delta(S))\ge 1$, we have
$$\frac{x(\delta(S))}{f(S)-z(\delta(S))}\ge 
\frac{x(e_p)}{|E|/2}=\gamma_{\min}\,.
$$

Due to the termination condition, for every $S\subseteq V$ with 
$f(S)-z(\delta(S))\ge 1$ the edge set $\delta(S)$ contains at least one of the 
edges $e_1$,\ldots,$e_t$ selected by the algorithms during its $t$ iterations. 
Therefore, $x(\delta(S)) \geq x(e_p)$, by the choice of $p$ in step 8.
Moreover, by \autoref{rem:function_values}  $f(S)-z(\delta(S))\le |E|/2$. 
The claim then follows. 

Our second claim is that $\gamma_{\max}$ is a valid upper bound on the shortest 
row length.
To see this, note that $f(U_p)-z(\delta(U_p))\ge 1$ 
because $f(U_p)=f_{p}(S_p)$ and $z(\delta(U_p))=z(\delta_{G_{p}}(S_p))$, 
proving that 
$\gamma_{\max}$ is a valid upper bound for the shortest length of a row indexed 
by $S\subseteq V$.

Finally, recalling that $e_p$ satisfies $x(e_p) = \max_{e\in\delta(U_p)}x(e)$ 
(step 4), we have 
$$
	\gamma_{\max}/\gamma_{\min}=\frac{x(\delta(U_p))}{2 x(e_p)/|E|}\le 
|E|^2/2\,. \qedhere
$$
\end{proof}

\section{Concluding remarks}\label{sec:wrap-up}

We are now ready to put all pieces together and give a proof of 
\autoref{thm:main} and
\autoref{crl:main} stated in the
introduction.

\begin{proof}[Proof of \autoref{thm:main}.]
Given an $\varepsilon >0$, we apply \autoref{alg:mult_weight} to 
\eqref{eq:jain} with 
$\zeta=\ln(1+\varepsilon)/|E|$. 
\autoref{alg:mult_weight} in its turn approximates the
shortest row at most $O((\ln |V|)/\zeta^2)$ times\footnote{Here, we
  use the inequality that $1+\zeta<\me^\zeta<1+\frac{7}{4}\zeta$ for
  $0<\zeta<1$.}. It makes a call to
\autoref{alg:upper_lower_bounds}, computing at most $|V|$
Gomory-Hu trees and afterwards the binary search needs
$O((\ln|E|)/\zeta)$ computations of a Gomory-Hu tree in
$G=(V,E)$. Recall, that
$\zeta=\ln(1+\varepsilon)/|E|=\Theta(\varepsilon/|E|)$ and hence each
linear program appearing in \autoref{alg:jain} is solved in time
dominated by finding $O(|E|^3(\ln |E|)^2/\varepsilon^3)$ Gomory-Hu
trees. 
Note that a Gomory-Hu tree for $G=(V,E)$ with 
respect to values $x(e)\in\R_+$, $e\in E$ can be found by $|V|$ computations of 
the minimum cut in $G$~\cite{GomoryHu}, so a Gomory-Hu tree can be found in 
strongly-polynomial time. %
\end{proof}

The number of times our algorithm solves the Gomory-Hu tree
problem is substantially smaller than the corresponding number for the
Ellipsoid method given the classical estimation for the encoding
length of vertices or given that $\max_{S} f(S)$ is sufficiently large, because
this number for the Ellipsoid method grows proportionally with the logarithm of 
$\max_{S} f(S)$.



\begin{proof}[Proof of \autoref{crl:main}.]
To obtain a \mbox{$(2+\varepsilon)$-approximation}
for~\eqref{eq:nsdp_ip} we apply \autoref{alg:jain}. 
\autoref{alg:jain} solves $O(|E|^2)$ linear programs, i.e.
there are $O(|E|^2)$ calls of \autoref{alg:jain} to
\autoref{alg:mult_weight} to solve linear
programs, and it makes at most $|E|$ roundings.
Considering rounding as a basic operation, the result follows.
\end{proof}


\medskip
We conclude the paper with some open questions.
It remains open whether one is able to provide a
$2$-approximation algorithm for \eqref{eq:nsdp_ip}, which does not need to solve 
linear programs. This question is among the top 10 
open questions in the theory of approximation algorithms according to Shmoys 
and 
Williamson~\cite{Williamson}. In our opinion, a good intermediate question is 
whether it is possible 
to give an algorithm with a constant approximation guarantee such 
that  the number of linear programs solved in its course is bounded by a 
constant. One way to prove this could be to exploit that after each rounding 
in 
the 
algorithm of Jain~\cite{Jain} we have a sufficiently ``good'' feasible point 
for 
the new linear program.
\bibliography{sndp}
\bibliographystyle{abbrv}
\end{document}